\newtheorem{theorem}{Theorem}[section]
\newtheorem{lemma}[theorem]{Lemma}
\newtheorem{corollary}[theorem]{Corollary}
\newtheorem{claim}{Claim}
\theoremstyle{definition}
\newtheorem*{ack}{Acknowledgments}
\theoremstyle{remark}
\newcommand{\sign}{\mathop{\mathrm{sign}}\nolimits}
\begin{document}

\title{Exceptional surgeries on $(-2,p,p)$-pretzel knots}

\date{\today}

\author{Kazuhiro Ichihara}
\address{Department of Mathematics, College of Humanities and Sciences, Nihon University,
3-25-40 Sakurajosui, Setagaya-ku, Tokyo 156-8550, Japan}
\email{ichihara@math.chs.nihon-u.ac.jp}

\author{In Dae Jong}
\address{Osaka City University Advanced Mathematical Institute, 3-3-138, Sugimoto, Sumiyoshi-ku, Osaka 558-8585, Japan}
\email{jong@sci.osaka-cu.ac.jp}
%\urladdr{http://jong.web.fc2.com/}

\author{Yuichi Kabaya}
\address{Osaka City University Advanced Mathematical Institute, 3-3-138, Sugimoto, Sumiyoshi-ku, Osaka 558-8585, Japan}
\email{kabaya@sci.osaka-cu.ac.jp}

\begin{abstract}
We give a complete description of exceptional surgeries on pretzel knots of type $(-2, p, p)$ with $p \ge 5$. 
It is known that such a knot admits a unique toroidal surgery yielding a toroidal manifold with a unique incompressible torus. 
By cutting along the torus, we obtain two connected components, one of which is a twisted $I$-bundle over the Klein bottle. 
We show that the other is homeomorphic to the one obtained by certain Dehn filling on the magic manifold. 
On the other hand, we show that all such pretzel knots admit no Seifert fibered surgeries. 
\end{abstract}

\keywords{pretzel knot; exceptional surgery; toroidal surgery; Seifert fibered surgery; JSJ decomposition; Rasmussen invariant; magic manifold}

\subjclass[2000]{Primary 57M50; Secondary 57M25}

\maketitle

\section{Introduction}

By the Hyperbolic Dehn Surgery Theorem \cite[Theorem 5.8.2]{Thurston1978}, 
all Dehn surgeries on a hyperbolic knot give hyperbolic manifolds 
with only finitely many exceptions. 
Thus a Dehn surgery on a hyperbolic knot 
creating a non-hyperbolic manifold 
is now called an \textit{exceptional surgery}, 
on which a large mount of studies have been done. 
See \cite{Boyer2002} for a survey on this topic for example.

In this paper, we consider exceptional surgeries on 
pretzel knots $P(-2,p,q)$ in the 3-sphere $S^3$ 
of type $(-2,p,q)$ with $p,q \ge 3$. 
Here note that $p$ and $q$ must be odd 
otherwise $P(-2,p,q)$ has two or more components. 
Also we assume that $(p,q) \ne (3,3), (3,5)$ 
since $P(-2,3,3) = T(3,4)$ and $P(-2,3,5) = T(3,5)$ are non-hyperbolic, where $T(x,y)$ denote a torus knot of type $(x,y)$. 
All the other knots $P(-2,p,q)$ are known to be hyperbolic. 
See \cite{Oertel1984}, \cite{Menasco1984}, and 
\cite{BonahonSiebenmann1979-85, BonahonSiebenmann2010}.

Exceptional surgeries on such pretzel knots have been studied extensively, 
motivated by the fact that the class of the knots includes 
various interesting examples about exceptional surgeries. 
See \cite{FuterIshikawaKabayaMattmanShimokawa2009}, \cite{IchiharaJong2009,IchiharaJong2010a,IchiharaJong2011}, and \cite{Wu1996,Wu2006,Wu2009,Wu2010} for example.

We here recall that 
exceptional surgeries are classified into the following three types: 
a reducible surgery (yielding a reducible manifold), 
a toroidal surgery (yielding a toroidal 3-manifold), 
a Seifert fibered surgery (yielding a Seifert fibered 3-manifold), 
which is a consequence of an affirmative answer 
to the Geometrization Conjecture. 

\medskip

Our first result concerns toroidal surgeries on the knots. 
It is known that only the $2(p+q)$-surgery on 
the $(-2,p,q)$-pretzel knot is toroidal, 
and the surgered manifold contains 
the unique embedded incompressible torus up to isotopy. 
See \cite{Wu2010} for detailed descriptions. 

\begin{theorem}\label{thm:toroidal_kabaya}
Consider the toroidal manifold obtained by the $2(p+q)$-surgery on 
the hyperbolic $(-2,p,q)$-pretzel knot with odd integers $3 \le p \le q$. 
Let $M_{p,q}$ be the one of the two components obtained from the toroidal manifold 
by cutting along the unique embedded incompressible torus, 
which is not a twisted $I$-bundle over the Klein bottle. 
Then $M_{p,q}$ is homeomorphic to 
the manifold obtained by a $(-(k+1)/k,-(l+1)/l)$-surgery on the chain-link with three components, where $p = 2k + 1$ and $q=2l+1$. 
% with $k \ge 1$ and $l \ge 2$. 
\end{theorem}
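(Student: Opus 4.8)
The plan is to produce an explicit surgery (Kirby) description of the toroidal manifold obtained by $2(p+q)$-surgery on $P(-2,p,q)$ in which the essential torus of Wu is visible, to cut along that torus, and then to simplify the resulting description of $M_{p,q}$ by Kirby moves until it becomes the exterior of the three-component chain link with two of its components Dehn filled along the slopes $-(k+1)/k$ and $-(l+1)/l$.

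First I would fix a convenient diagram for $P(-2,p,q)$. Regarding the $p$- and $q$-twist regions as obtained from the small seed pretzel knot $P(-2,1,1)$ (which matches the parities via $p=2k+1$, $q=2l+1$) by $-1/k$- and $-1/l$-surgery on two unknots $C_1,C_2$, each encircling the relevant pair of strands, one gets a presentation of the exterior $E(P(-2,p,q))$ as a two-component Dehn filling of the fixed link $L_0=P(-2,1,1)\cup C_1\cup C_2$. Adjoining the surgery slope $2(p+q)$ on the knot component --- which, by the twisting formula, is determined by the corresponding slope on the seed knot together with the linking data of $C_1$ and $C_2$ --- gives a surgery diagram for the closed toroidal manifold, now realized as a Dehn filling of $L_0$ alone.

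To locate the essential torus and to identify the Klein bottle side I would use the strong invertibility of $P(-2,p,q)$: by the Montesinos trick the whole configuration descends to a tangle (Montesinos-link) diagram in $S^3$, the $2(p+q)$-surgery corresponds to a rational tangle replacement creating an essential Conway sphere, and the two sides of the torus are the double branched covers of the two resulting tangles. One side is a tangle sum of the form $t(1/2)+t(1/2)$, whose double branched cover is the twisted $I$-bundle over the Klein bottle; the double branched cover of the complementary tangle is $M_{p,q}$. (Alternatively one can work directly with the surgery diagram above, tracking the torus that is forced to appear once the slope $2(p+q)$ is inserted.) Either way one is left with $M_{p,q}$ described either as the double branched cover of an explicit $2$-string tangle depending on $k$ and $l$, or as an explicit rational surgery on a link in $S^3$ with one component left unfilled, this last component bounding the boundary torus.

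The final and most delicate step is the identification. I would simplify this presentation of $M_{p,q}$ by Rolfsen twists and handle slides --- it is here that the passage from $-1/k$ to $-(k+1)/k$ should appear automatically, as a $-1$ framing correction picked up when $C_1$ (respectively $C_2$), which links a strand of the diagram once, is carried into its chain-link position --- until the diagram becomes the standard three-component chain link with two of its components carrying the coefficients $-(k+1)/k$ and $-(l+1)/l$ and the third left unfilled; by definition this is the manifold obtained from the chain link by $(-(k+1)/k,-(l+1)/l)$-surgery, that is, a Dehn filling of the magic manifold. I expect the principal obstacle to be exactly this bookkeeping: carrying the Kirby calculus through so as to keep the two rational coefficients under control and arrive at precisely $-(k+1)/k$ and $-(l+1)/l$, and verifying that the side of the Conway sphere (equivalently, of the torus) that we discard really is the twisted $I$-bundle over the Klein bottle --- for which I would invoke Wu's result that the torus is unique and splits off such an $I$-bundle. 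As a consistency check, the evident symmetry of the chain link that exchanges its two outer components corresponds to the symmetry $p\leftrightarrow q$, i.e. $k\leftrightarrow l$, of the statement.
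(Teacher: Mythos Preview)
Your plan is a reasonable alternative route, but it differs in method from the paper at both of the two main steps.

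For locating the torus and splitting off the Klein-bottle piece, the paper does not use the Montesinos trick or a Conway sphere. Instead it places $K=P(-2,p,q)$ on a genus-two Heegaard surface $F$ so that the surface slope equals $2(p+q)$, and then invokes Dean's lemma: the surgered manifold is $W\cup_{\overline F}W'$, where $W,W'$ are obtained from the two handlebodies $V,V'$ by attaching a $2$-handle along $K$, and $\overline F$ is the resulting torus. The inside handlebody $V'$ visibly retracts to a once-punctured Klein bottle bounded by $K$, so $W'$ is the twisted $I$-bundle over the Klein bottle by inspection; no appeal to Wu's uniqueness is needed to know which side to discard. This is cleaner than chasing a Conway sphere through the quotient and then lifting.

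For the identification of $W=M_{p,q}$ with a filling of the magic manifold, the paper also starts from the seed link $P(-2,1,1)\cup T_1\cup T_2$ as you do, but it does not use Kirby calculus. Rather, it gives an explicit handle decomposition of $W_0=(V\setminus N(T_1\cup T_2))\cup\text{($2$-handle along $K_0$)}$, turns it into an ideal polyhedron, and matches this polyhedron face-by-face with Thurston's drum decomposition of the chain-link complement. The slope change $-1/k\mapsto -(k+1)/k$ is then read off from the action of the homeomorphism on peripheral curves (expressed in the generators $p_x,p_y,p_z$), not from a Rolfsen twist. Your Kirby-calculus route should work in principle, but the bookkeeping you flag as the ``principal obstacle'' is exactly what the paper bypasses by working combinatorially; the polyhedral match also allows a SnapPea cross-check that a sequence of Kirby moves would not.
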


The chain-link with three components is depicted in 
Figure~\ref{fig:chain_link}, 
whose complement is called the ``magic manifold''. 
For its definition and notation, 
see \cite{martelli-petronio} in detail. 

In particular, in \cite{martelli-petronio}, 
the exceptional surgeries on the link are 
completely determined and classified. 
By referring their classification, we immediately obtain the following:

\begin{corollary}\label{cor:(-2,p,p)Tor}
Under the same setting as in Theorem~\ref{thm:toroidal_kabaya}, 
the manifold $M_{3,q}$ is the Seifert fibered space $(D,(3,1),(l-1,l))$. 
All the other $M_{p,q}$ (i.e., $p \ge 5$) are hyperbolic. 
In particular, $M_{5,5}$ is homeomorphic to 
the ``figure-8 knot sister manifold''. 
\end{corollary}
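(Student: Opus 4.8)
The plan is to push the whole question onto the magic manifold and then invoke the Dehn surgery classification of Martelli and Petronio. By Theorem~\ref{thm:toroidal_kabaya}, the piece $M_{p,q}$ is obtained from the magic manifold $N$, the exterior of the three-component chain-link, by Dehn filling two of its three cusps along the slopes $-(k+1)/k$ and $-(l+1)/l$, where $p = 2k+1$ and $q = 2l+1$; the third cusp survives and supplies the torus boundary of $M_{p,q}$. Since the slope conventions here are exactly those used in \cite{martelli-petronio}, everything reduces to locating the pairs $\left( -(k+1)/k, -(l+1)/l \right)$ among their completely classified exceptional Dehn fillings of $N$.

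First I would record the shape of these slopes: since $-(k+1)/k = -1 - 1/k$, as $k$ runs through the positive integers the slope runs through $-2, -3/2, -4/3, \dots$, strictly increasing to $-1$, and it is an integer only for $k = 1$, where it equals $-2$. Consequently a ``small'' slope can enter the pair only when $k = 1$ or $l = 1$, i.e.\ $p = 3$ or $q = 3$; and since the knot is assumed hyperbolic, for $p = 3$ we must have $q \ge 7$, hence $l \ge 3$.

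For $p = 3$, one of the two filling slopes is the integral slope $-2$; by the Martelli--Petronio classification the two-cusp filling $N\bigl( -2, -(l+1)/l \bigr)$ is Seifert fibered, and tracking the Seifert invariants through the two fillings (the $-2$-filling producing a Seifert piece whose remaining cusp is vertical, the $-(l+1)/l$-filling then creating the second exceptional fiber) yields $(D,(3,1),(l-1,l))$, which is a genuine Seifert fibered space since $l - 1 \ge 2$. For $p \ge 5$ we have $k, l \ge 2$, so both filling slopes are non-integral and lie strictly between $-2$ and $-1$; comparing with the complete list of non-hyperbolic two-cusp fillings of $N$ in \cite{martelli-petronio} (finite, modulo the symmetries of $N$) shows that no pair of the form $\left( -(k+1)/k, -(l+1)/l \right)$ with $k, l \ge 2$ occurs there, so every such $M_{p,q}$ is hyperbolic. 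Finally, for $(k,l) = (2,2)$ the two filling slopes are both $-3/2$, and recognizing this particular filling of $N$ --- via \cite{martelli-petronio}, or by a direct identification of the resulting one-cusped manifold --- shows $M_{5,5}$ to be the figure-8 knot sister manifold.

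The main obstacle is the bookkeeping with framings: one has to match the meridian-longitude data implicit in Theorem~\ref{thm:toroidal_kabaya} with the homological framing used in \cite{martelli-petronio} on each cusp --- possibly after moving our two filled cusps, by a symmetry of $N$, into the position tabulated there --- and then verify that intersecting the one-parameter families $\{-(k+1)/k\}$ with their complete exceptional list yields exactly the exceptions claimed and nothing more. A secondary, purely computational point is to confirm that the Seifert invariants in the $p = 3$ case really assemble into $(D,(3,1),(l-1,l))$ rather than some other fibering.
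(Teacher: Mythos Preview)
Your proposal is correct and follows essentially the same route as the paper's own proof: both invoke Theorem~\ref{thm:toroidal_kabaya} to identify $M_{p,q}$ with $N(-(k+1)/k,-(l+1)/l)$ and then read off the result from the Martelli--Petronio classification of exceptional fillings of the magic manifold. The paper is slightly terser---it simply quotes the explicit list of exceptional slopes $\{\infty,-3,-2,-1,0\}$ and exceptional pairs $\{(1,1),(-4,-1/2),(-3/2,-5/2)\}$ from \cite{martelli-petronio} and checks directly that none of them occur for $k,l\ge 2$, and cites the same reference for the Seifert invariants when $k=1$ and for the identification of $N(-3/2,-3/2)$---whereas you sketch the mechanism a bit more, but the substance is identical.
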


Theorem~\ref{thm:toroidal_kabaya} and the Corollary~\ref{cor:(-2,p,p)Tor} will be proved in Section~\ref{sec:toroidal}. 

%\fbox{Comments on Computer experiments?}

\medskip

Our second result concerns Seifert fibered surgeries on such pretzel knots. 
For instance, $P(-2,3,7)$ is well-known 
for it is the first hyperbolic example, 
which admits non-trivial Seifert fibered surgeries~\cite{FintushelStern1980}. 
On the other hand, in the case where $p=q$, we obtain the following: 

\begin{theorem}
\label{thm:SF_IJ}
A pretzel knot $P(-2,p,p)$ with positive integers $p \ge5$ 
admits no Seifert fibered surgeries. 
\end{theorem}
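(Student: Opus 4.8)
The plan is to assume that $r$-surgery on $K:=P(-2,p,p)$ with $p\ge 5$ yields a Seifert fibered space and to reach a contradiction by running through the possible geometric types of $K(r)$. Since $K$ is hyperbolic and, by the prior work on exceptional surgeries quoted in the Introduction, admits no reducible surgery, the manifold $K(r)$ is irreducible, hence either toroidal or atoroidal. If $K(r)$ is toroidal, then $r$ must be the unique toroidal slope $2(p+p)=4p$; but Theorem~\ref{thm:toroidal_kabaya} and Corollary~\ref{cor:(-2,p,p)Tor} describe the JSJ decomposition of $K(4p)$ as a twisted $I$-bundle over the Klein bottle glued along a torus to the \emph{hyperbolic} manifold $M_{p,p}$, so $K(4p)$ contains a hyperbolic JSJ piece and is certainly not Seifert fibered. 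It therefore remains to exclude the case that $K(r)$ is an \emph{atoroidal} Seifert fibered space, equivalently a Seifert fibration over $S^2$ with at most three exceptional fibers (allowing also the base orbifold $\mathbb{RP}^2$ with at most one exceptional fiber).

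The Rasmussen invariant is then used to dispose of the L-space surgeries. Applying the skein inequalities $s(K_+)-2\le s(K_-)\le s(K_+)$ to the standard diagram of $P(-2,p,p)$ (undoing clasps in the two $p$-tangles to reduce to torus knots) and combining with the universal bound $|s(K)|\le 2g_4(K)\le 2g(K)$ and the known Seifert genus $g(P(-2,p,p))=p$, one shows that $s(P(-2,p,p))$ is strictly less than $2p$ for all $p\ge 5$. (By contrast $P(-2,3,7)$, an L-space knot, has $s=2g=10$; this is the essential reason why $P(-2,3,q)$ can support Seifert fibered surgeries while $P(-2,p,p)$ cannot.) Since every L-space knot is fibered and strongly quasipositive and hence satisfies $s=2g$, the knot $K=P(-2,p,p)$ admits \emph{no} L-space surgery. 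Consequently $K(r)$ is not a lens space, not a spherical space form (these are L-spaces, so prism manifolds and the other elliptic Seifert fibered spaces are excluded), and not any Seifert fibered L-space; thus $K(r)$ must be a Seifert fibration over $S^2$ with three exceptional fibers, of infinite fundamental group, that is not an L-space.

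To finish, one must eliminate these remaining ``large'' small Seifert fibered surgeries. Because $K$ is hyperbolic, the Boyer--Zhang refinement of the cyclic surgery theorem forces $r$ to be an integer in this situation, and the Gordon--Luecke and CGLS bounds on distances between exceptional slopes of a hyperbolic knot confine $r$ to a bounded range around the toroidal slope $4p$; together with the available partial classification of exceptional surgeries on $P(-2,p,q)$ this leaves only a short explicit list of candidate integer slopes near $4p$. For each candidate one analyses the Seifert invariants that a Seifert fibered $K(r)$ would necessarily carry---its first homology $\mathbb{Z}/r$ is cyclic, and its orbifold data are tightly constrained by the proximity of $r$ to the toroidal surgery and by Theorem~\ref{thm:toroidal_kabaya}---and derives a contradiction. \textbf{This last step is the crux of the argument:} the Rasmussen invariant only obstructs \emph{L-space} surgeries, and non-L-space small Seifert fibered surgeries genuinely occur for some hyperbolic knots (for instance $\pm 1$-surgery on the figure-eight knot gives the non-L-space Brieskorn sphere $\Sigma(2,3,7)$), so they must be excluded using the extra input from the distance bounds and the pretzel-specific data; a secondary technical point is the precise computation of $s(P(-2,p,p))$ in the middle paragraph.
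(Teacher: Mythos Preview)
Your proposal has a genuine gap precisely where you flag it: the final step is not actually carried out. Having shown (granting your skein computation) that $K=P(-2,p,p)$ is not an L-space knot, you have ruled out lens-space and other spherical surgeries, but you then have to eliminate \emph{every} non--L-space small Seifert fibered surgery. For this you offer only a sketch: distance bounds confine $r$ to an integer interval around $4p$, and ``for each candidate one analyses the Seifert invariants \ldots\ and derives a contradiction.'' No such analysis is given, and there is no obvious mechanism by which first homology, proximity to $4p$, or Theorem~\ref{thm:toroidal_kabaya} would rule out, say, $r=4p-2$ or $r=4p+3$ producing a Brieskorn-type small Seifert space. (Your invocation of the cyclic surgery theorem to force $r\in\mathbb{Z}$ is also misplaced: CGLS concerns cyclic quotients, which you have already excluded; integrality of atoroidal Seifert slopes here comes from Wu's Montesinos-knot results.) Since you yourself note that non--L-space small Seifert surgeries do occur on hyperbolic knots, this missing step is not a formality.

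The paper's argument is quite different and avoids this obstacle entirely. It exploits the period-two symmetry of $P(-2,p,p)$ with factor knot $T(2,p)$: via Miyazaki--Motegi, a Seifert surgery on $K$ forces $T(2,p)(r/2)$ to be a lens space, and Moser's classification then pins $r$ down to exactly $4p\pm 1$ (not merely an interval). For these two slopes the Montesinos trick produces an explicit branch knot $K_{p\pm}$, and the Rasmussen invariant is applied to $K_{p\pm}$---not to $K$---together with a Gordon--Litherland signature computation, to show $K_{p\pm}$ is neither Montesinos nor a torus knot, contradicting the fact that the branch set of a small Seifert double cover must be one of these. Thus in the paper the Rasmussen invariant does all the work after the slope restriction, whereas in your outline it handles only the L-space case and leaves the harder part open.
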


This will be proved in Section 3 by applying 
a method developed in \cite{IchiharaJong2011} by the first two authors. 

\medskip

We note that our theorems together with known facts 
complete the classification 
of the exceptional surgeries on $P(-2,p,p)$ with $p \ge 5$. 
To see this, and also as a background, 
we recall some of known facts 
on exceptional surgeries on hyperbolic pretzel knots. 
Actually most of the following results concern Montesinos knots. 
However, for simplicity, we only deal with pretzel knots. 
See the original references for precise statements.

Wu showed that 
there are no reducible surgery on hyperbolic pretzel knots~\cite{Wu1996}, 
and also obtained a complete classification of 
toroidal surgeries on pretzel knots~\cite{Wu2006}. 
If a pretzel knot contains at most two non-integer tangles, 
then it is equivalent to a two-bridge knot, 
and then exceptional surgeries on such knots 
were completely determined in \cite{BrittenhamWu2001}. 
On the other hand, 
if a pretzel knot contains at least four non-integer tangles, 
then it was also shown by Wu~\cite{Wu1996} that 
such a pretzel knot admits no exceptional surgery. 
Furthermore, on pretzel knots, 
the first two authors gave a complete classification of surgeries 
yielding $3$-manifolds 
with cyclic or finite fundamental groups \cite{IchiharaJong2009}, and 
showed that there are no toroidal Seifert surgeries on pretzel knots 
other than the trefoil~\cite{IchiharaJong2010a}. 
Very recently, in \cite{Wu2009,Wu2010}, 
Wu gave several restrictions, in particular, he showed that if a hyperbolic pretzel knot of length three admits an atoroidal Seifert fibered surgery, then it is equivalent to 
$P(q_1,q_2,q_3,n)$ with $n=0,-1$ and, up to relabeling, 
$(|q_1|,|q_2|,|q_3|) = (2,|q_2|,|q_3|), (3,3,|q_3|), \mbox{ or } (3,4,5)$~\cite[Theorem 7.2]{Wu2010}.

%%%%%%%%%%%%%%%%%%%%%%%%%%%%%%%%%%%%

\section{Toroidal surgeries}\label{sec:toroidal}

In this section, 
we give a proof of Theorem~\ref{thm:toroidal_kabaya} 
and Corollary~\ref{cor:(-2,p,p)Tor}. 

First of all we set up our definitions and notations. 

A \textit{pretzel knot} of type $(a_1, a_2, a_3)$ 
with integers $a_1, a_2, a_3$, 
denoted by $P(a_1, a_2, a_3)$, is defined as 
a knot admitting a diagram obtained by 
putting rational tangles of the forms $1/a_1, 1/a_2, 1/a_3$ together in a circle. 

From a given knot $K$ in $S^3$, 
we obtain a closed orientable 3-manifold 
by a \textit{Dehn surgery} on $K$ as follows: 
Remove the interior of a tubular neighborhood $N(K)$ of $K$, and 
glue solid torus back. 
The slope 
(i.e., the isotopy class of an unoriented non-trivial simple closed curve) 
on the peripheral torus $\partial N(K)$, 
which is identified with the meridian of the attached solid torus 
is called the \textit{surgery slope}. 
It is well-known that slopes on the torus $\partial N(K)$ are parameterized by $\mathbb{Q} \cup \{ 1/0 \}$ by using the standard meridian-longitude system for $K$.
%For example, the meridional (resp.~longitudinal) slope for $K$ corresponds to $1/0$ (resp.~$0 = 0/1$). 
Thus, when the surgery slope corresponds to $r \in \mathbb{Q} \cup \{1/0\}$, 
we call the Dehn surgery on $K$ along the surgery slope 
the \textit{$r$-Dehn surgery} or \textit{$r$-surgery} for brevity, 
and denote the obtained manifold by $K(r)$.

\subsection{Proof of Theorem~\ref{thm:toroidal_kabaya}}
Let $K$ be the $(-2,p,q)$-pretzel knot.
Put $K$ on a genus two surface $F$ which bounds two handlebodies in $S^3$ as shown in Figure \ref{fig:pretzel_on_surface}, 
denote the `outside' of $F$ by $V$ and the `inside' by $V'$.
The isotopy class on $\partial N(K)$ determined by the intersection $F \cap \partial N(K)$ is called the \textit{surface slope of $K$ with respect to $F$}.
Now the surface slope is $2(p+q)$.

\begin{figure}[htb]
\centering
\input{pretzel_on_surface.pstex_t}
\caption{}
\label{fig:pretzel_on_surface}
\end{figure}

The manifold obtained from a Dehn surgery on $K$ along the surface slope is described as follows.

\begin{lemma}[{\cite[Lemma 2.1]{dean}}]
Let $W$ (resp. $W'$) be the manifold obtained from $V$ (resp. $V'$) by attaching a $2$-handle along $K$
and $\overline{F} = (F - N(K)) \cup (D^2 \times \{0,1\})$.
Then the manifold obtained from Dehn surgery on $K$ with surface slope is homeomorphic to $W \cup_{\overline{F}} W'$.
\end{lemma}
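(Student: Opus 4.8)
The plan is to read off the surgered manifold directly from the genus-two splitting $S^3 = V \cup_F V'$; write $\gamma$ for the surface slope. Since $K$ lies on $F$, a tubular neighbourhood $N(K)$ meets $F$ in an annular neighbourhood $A$ of $K$, and $A$ cuts $N(K)$ into two solid tori $\nu := N(K) \cap V$ and $\nu' := N(K) \cap V'$. Setting $A' := \partial N(K) \cap V$ and $A'' := \partial N(K) \cap V'$, we have $\partial \nu = A \cup A'$, $\partial \nu' = A \cup A''$, and $A'$ (resp.\ $A''$) is a properly embedded annulus in $V$ (resp.\ $V'$) with $\partial A' = \partial A'' = \partial A$. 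The first step is to record the induced decomposition of the knot exterior,
\[
S^3 - \mathrm{int}\, N(K) \;=\; (V - \mathrm{int}\, \nu) \;\cup_{F_0}\; (V' - \mathrm{int}\, \nu'), \qquad F_0 := F - \mathrm{int}\, N(K),
\]
so that $\partial(V - \mathrm{int}\, \nu) = F_0 \cup A'$ and $\partial(V' - \mathrm{int}\, \nu') = F_0 \cup A''$.

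Next I would attach the surgery solid torus $J$, glued so that a meridian of $J$ is identified with $\gamma$. The key geometric observation is that $\gamma$ is isotopic on $F$ to $K$, so the two components of $\partial F_0 = \partial A = F \cap \partial N(K)$ are copies of the meridian of $J$ and hence bound disjoint meridian disks $\Delta_0, \Delta_1 \subset J$. These split $J$ into two $3$-balls: on the boundary, $\partial J = \partial N(K)$ is cut by $\partial \Delta_0 \cup \partial \Delta_1$ into the two annuli $A'$ and $A''$, so after labelling we have $3$-balls $B$ and $B'$ with $\partial B = A' \cup \Delta_0 \cup \Delta_1$ and $\partial B' = A'' \cup \Delta_0 \cup \Delta_1$. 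Therefore
\[
K(\gamma) \;=\; (V - \mathrm{int}\, \nu) \;\cup\; (V' - \mathrm{int}\, \nu') \;\cup\; B \;\cup\; B'.
\]

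It remains to recognise $(V - \mathrm{int}\, \nu) \cup_{A'} B$ as $W$ and $(V' - \mathrm{int}\, \nu') \cup_{A''} B'$ as $W'$, and to match the gluing surfaces. For the first point I would use the elementary fact that a solid torus with a $2$-handle attached along a longitude is a $3$-ball (the new $2$-handle cancels the $1$-handle of the solid torus; equivalently, the result is simply connected with $S^2$ boundary). Since $W = V \cup_A H$ with the $2$-handle $H$ attached along the longitudinal annulus $A \subset \partial \nu$, and $V = (V - \mathrm{int}\, \nu) \cup_{A'} \nu$, this gives
\[
W \;=\; (V - \mathrm{int}\, \nu) \;\cup_{A'}\; (\nu \cup_A H) \;\cong\; (V - \mathrm{int}\, \nu) \;\cup_{A'}\; B,
\]
because $\nu \cup_A H$ and $B$ are both $3$-balls attached to $V - \mathrm{int}\, \nu$ along the same annulus $A'$ in their boundary spheres — and any two such gluings agree up to homeomorphism — with the two complementary disks being $D^2 \times \{0,1\}$ in one case and $\Delta_0, \Delta_1$ in the other. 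The identical argument gives $W'$. Finally $K(\gamma) = W \cup W'$, glued along $F_0 \cup \Delta_0 \cup \Delta_1 = (F - N(K)) \cup (D^2 \times \{0,1\}) = \overline{F}$, which is the claim.

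I expect the main difficulty to be organizational rather than conceptual: keeping straight the four pieces $V - \mathrm{int}\, \nu$, $V' - \mathrm{int}\, \nu'$, $B$, $B'$ and the annuli $A, A', A''$ along which they are assembled. The only genuinely non-formal input is the fact that a solid torus with a longitudinal $2$-handle is a ball; and the one place where the hypothesis that $\gamma$ is the \emph{surface} slope (rather than an arbitrary slope) is indispensable is the claim that the boundary circles of $F_0$ bound meridian disks of $J$ — for any other slope these curves would wind around $J$ and the splitting of $J$ into two balls would fail.
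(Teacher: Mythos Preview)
The paper does not actually prove this lemma: it is quoted verbatim as \cite[Lemma~2.1]{dean} and used as a black box, so there is no in-paper argument to compare against. Your proof is correct and is exactly the standard cut-and-paste argument one expects here (and is essentially what appears in Dean): split $N(K)$ by $F$ into two half-tubes, split the surgery solid torus by meridian disks bounded by $\partial F_0$ into two $3$-balls, and then recognise each half-tube-plus-ball as the relevant $2$-handle attachment. Your identification of the gluing surface with $\overline{F}$ and your remark that the surface-slope hypothesis is precisely what makes $\partial F_0$ bound meridian disks in $J$ are both on point.
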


The inside $V'$ contains a properly embedded one-holed Klein bottle whose boundary coincides with $K$, 
and $V'$ is homeomorphic to the regular neighborhood of the one-holed Klein bottle.
Therefore $W' = V' \cup_K (\textrm{2-handle})$ is a twisted $I$-bundle over the Klein bottle and $\overline{F}$ is a torus.
We will show that $W = V \cup_K (\textrm{2-handle})$ is obtained by a Dehn surgery on the chain-link with three components.

Instead of considering the $(-2,p,q)$-pretzel knots, we study the $3$-component link depicted in Figure \ref{fig:associated_link}, 
which is obtained from the $(-2,1,1)$-pretzel knot $K_0$ by adding two trivial components $T_1$ and $T_2$ encircling the two half-twisted strands respectively.
The $(-2,p,q)$-pretzel knot is obtained from the link by the $-1/k$-surgery along $T_1$ and the $-1/l$-surgery along $T_2$, where $p=2k+1$ and $q = 2l+1$ respectively.
Therefore the manifold $W$ is obtained from $V$ by attaching a $2$-handle along $K_0$ then doing the $-1/k$-surgery along $T_1$ and the $-1/l$-surgery along $T_2$. 
Let $W_0$ be the manifold obtained from $V - (N(T_1) \cup N(T_2) )$ by attaching a $2$-handle along $K_0$.
We will show that $W_0$ is homeomorphic to the exterior $N$ of the chain-link with three components, 
and describe the relation between peripheral curves of $W_0$ and $N$.
Here we fix the standard meridian and longitude on each component of $\partial N$ and we identify slopes with $\mathbb{Q} \cup \{\infty\}$.

\begin{figure}[htb]
\begin{center}
\input{associated_link.pstex_t}
\caption{}
\label{fig:associated_link}
\end{center}
\end{figure}

Take meridian disks $A$ and $B$ of the outside handlebody $V$ as shown in Figure \ref{fig:associated_link} and cut $V$ along $A$ and $B$.
Then remove the regular neighborhood of the two arcs corresponding to $T_1$ and $T_2$.
Cut the resulting manifold along the disks $C$ and $D$ as indicated in Figure \ref{fig:handle_decomp_1}.
This gives a handle decomposition of $W_0$ (Figure \ref{fig:handle_decomp_2}).
In Figure \ref{fig:handle_decomp_2}, we modify the regions encircled by dotted curves by the operations described in Figure \ref{fig:simplification_1}.  
Then we obtain a simplified handle decomposition of $W_0$ (Figures \ref{fig:handle_decomp_4} and \ref{fig:handle_decomp_5}).
We regard the diagram of the handle decomposition given in Figure \ref{fig:handle_decomp_5} as a trivalent graph, 
then taking the dual of this trivalent graph, we obtain a triangulation of the boundary of a $3$-ball (Figure \ref{fig:polyhedron}).
In this way we regard the handle decomposition of $W_0$ as a topological ideal polyhedral decomposition of $W_0$.
The ideal polyhedron further decomposed into 6 ideal tetrahedra.
By using SnapPea \cite{SnapPea}, we can check that $W_0$ is obtained from gluing 6 positively oriented ideal tetrahedra, therefore has a hyperbolic structure.
We can also check that there exists an isometry from $W_0$ to the exterior $N$ of the chain-link 
and we confirm that the isometry maps the slope $-1/k$ on $\partial W_0$ to the slope $-\frac{k+1}{k}$ on $\partial N$, and similarly for the slope $-1/l$.
This completes the (computer-aided) proof, but we also give an explicit homeomorphism between $W_0$ and $N$.

\begin{figure}[htb]
\centering
\input{handle_decomp_1.pstex_t}
\caption{}
\label{fig:handle_decomp_1}
\end{figure}

\begin{figure}[htb]
\centering
\input{handle_decomp_2.pstex_t}
\caption{}
\label{fig:handle_decomp_2}
\end{figure}

\begin{figure}[htb]
\centering
\input{simplification_1.pstex_t}

\vspace{30pt}
\input{simplification_2.pstex_t}
\caption{}
\label{fig:simplification_1}
\end{figure}

% \begin{figure}[htb]
% \input{handle_decomp_3.pstex_t}
% \caption{}
% \label{fig:handle_decomp_3}
% \end{figure}

\begin{figure}[htb]
\centering
\input{handle_decomp_4.pstex_t}
\caption{}
\label{fig:handle_decomp_4}
\end{figure}

\begin{figure}[htb]
\centering
\input{handle_decomp_5.pstex_t}
\caption{}
\label{fig:handle_decomp_5}
\end{figure}

\begin{figure}[htb]
\centering
\input{polyhedron.pstex_t}
\caption{}
\label{fig:polyhedron}
\end{figure}

We decompose the chain-link exterior into two ``drums'' according to Section 6 of \cite{Thurston1978}.
Let $X$, $Y$ and $Z$ be the disks bounded by the components of the chain-link in the simplest way (Figure \ref{fig:chain_link}). 
Slicing the exterior $N$ along the disks, then we obtain a solid torus whose boundary is tiled by quadrilaterals (Figure \ref{fig:sliced_link_complement}).
This solid torus is decomposed into two drums and further into 6 tetrahedra as shown in Figure \ref{fig:splited_drums}. %  compatible with the gluing pattern of faces.
Glue together these 6 ideal tetrahedra along the faces which contain the double arrowed edges of Figure \ref{fig:splited_drums}, 
we obtain an ideal polyhedron with 12 faces (Figure \ref{fig:chain_polyhedron}). 
Since the gluing pattern of the ideal polyhedron is equivalent to the one given in Figure \ref{fig:polyhedron}, $W_0$ and $N$ are homeomorphic.

\begin{figure}[htb]
\centering
\input{chain_link_with_gen.pstex_t}
\caption{}
\label{fig:chain_link}
\end{figure}

\begin{figure}[htb]
\centering
\input{sliced_link_complement.pstex_t}
\caption{}
\label{fig:sliced_link_complement}
\end{figure}

\begin{figure}[htb]
\centering
\input{two_drums.pstex_t}
\hspace{30pt}
\input{splitted_drums.pstex_t}
\caption{}
\label{fig:splited_drums}
\end{figure}

\begin{figure}[htb]
\centering
\input{chain_polyhedron.pstex_t}
\caption{}
\label{fig:chain_polyhedron}
\end{figure}

Finally we observe the correspondence between peripheral curves of $W_0$ and $N$.
In the polyhedral decomposition of $W_0$ given in Figure \ref{fig:polyhedron}, 
a path $m_1$ from the face $C''$ to the face $c''$ and a path $l_1$ from the face $A$ to the face $a$ form a meridian and longitude pair of $T_1$.
Here the path corresponding to $m_1$ in $N$ is represented by a word of the form $p_y$ and the path corresponding to $l_1$ by $p_x p_z (p_y)^{-1}$, 
where $p_x$ (resp. $p_y$, $p_z$) is an element of the fundamental group of $N$ intersecting the disk $X$  (resp. $Y$, $Z$) at once (Figure \ref{fig:chain_link}).
On the other hand a meridian and longitude pair of the component corresponding to the disk $Y$ is represented by the words $p_y$ and $p_z p_x$.
Therefore the slope $p/q$ on $T_1$ is mapped to $\frac{p-q}{q}$, in particular $-1/k$ to $-\frac{k+1}{k}$.
% Similarly a path from $B$ to $b$ is represented by a word of the form $z^{-1}$ and a path from $D'$ to $d'$ by $x^{-1} y^{-1} z$.
By symmetry, the slope $-1/l$ on $T_2$ is mapped to $-\frac{l+1}{l}$. \qed

\subsection{Proof of Corollary~\ref{cor:(-2,p,p)Tor}}
Let $N$ be the complement of the chain-link with three components, also known as the \textit{magic manifold}. 
We denote the $p/q$- and $r/s$-Dehn filling of $N$ by $N(p/q, r/s)$.
Since any two components of $\partial N$ can be interchanged by an automorphism which preserves the peripheral structure, this notation makes sense. 

Then, by Theorem~\ref{thm:toroidal_kabaya}, 
the manifold $M_{p,q}$ is homeomorphic to 
$N(-\frac{k+1}{k}, -\frac{l+1}{l})$ where $p=2k+1$ and $q=2l+1$ with $k \leq l$. 

On the other hand, by the result of Martelli and Petronio \cite{martelli-petronio}, we know that 
$N(a/b,c/d)$ is hyperbolic except if one of the following occurs up to permutation:
\begin{itemize}
\item $a/b \in \{ \infty, -3,-2,-1,0\}$, 
\item $(a/b,c/d) \in \{ (1,1), (-4,-1/2), (-3/2,-5/2)\}$.
\end{itemize}
Thus we see that all the manifolds $M_{p,q}$ with $p \ge 5$ (i.e., $k \ge2$) are hyperbolic. 

Furthermore, when $p=3$, equivalently $k=1$, it is shown in \cite{martelli-petronio} that 
the manifold $N(-2, -\frac{l+1}{l})$ is homeomorphic to $(D,(3,1),(l-1,l))$. 
% See \cite{martelli-petronio} in detail. 

In particular, 
the manifold $M_{5,5}$ is homeomorphic to $N(-\frac{3}{2}, -\frac{3}{2})$, 
which is the ``figure-8 knot sister manifold''. 
Also note that, since $N(-3/2)$ is the Whitehead sister link ($(-2,3,8)$-pretzel link), 
the $M_{5,q}$ is obtained from the Whitehead sister link. \qed

%%%%%%%%%%%%%%%%%%%%%%%%%%%%%%%%%%%%

\section{Seifert fibered surgeries}\label{sec:Seifert}

In this section we give a proof of Theorem~\ref{thm:SF_IJ}. 
Essentially the proof is on the same line as 
that for \cite[Proposition 3.7]{IchiharaJong2011}. 
%See the original proof for the details. 

\bigskip

\noindent
\textbf{Proof of Theorem~\ref{thm:SF_IJ}.} 
Let $K$ be a pretzel knot $P(-2,p,p)$ with a positive integer $p \ge5$. 
Assume for the contrary that 
$K$ admits a Seifert fibered surgery, i.e., 
$K(r)$ is Seifert fibered for some $r \in \mathbb{Q}$. 
Then by the results in \cite{FuterIshikawaKabayaMattmanShimokawa2009}, \cite{IchiharaJong2009}, and \cite{IchiharaJong2010a}, 
$K(r)$ must be 
a Seifert fibered manifold with a base orbifold $S^2$ 
having three singular fibers. 
In particular, $K(r)$ is atoroidal. 

First we give a restriction of the slope $r$ as follows. 

\begin{claim}
We have $r = 4 p \pm 1$. 
%If $K(r)$ is Seifert fibered for some $r \in \mathbb{Q}$, 
%then $r = 4 p \pm 1$. 
\end{claim}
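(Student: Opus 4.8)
The plan is to combine three standard ingredients: the cyclic/finite surgery obstruction for the distance between the Seifert slope and the boundary slope of the genus-one Seifert surface, the sharp bound on the denominator of an exceptional slope, and a parity/integrality argument. First I would recall that $K = P(-2,p,p)$ with $p \ge 5$ is a hyperbolic knot bounding an obvious once-punctured Klein bottle as well as a genus-one Seifert surface; the boundary slope coming from the essential once-punctured Klein bottle is the surface slope $2(p+p) = 4p$, which by Theorem~\ref{thm:toroidal_kabaya} (and the discussion preceding it) is the unique toroidal slope. Since $K(r)$ is a small Seifert fibered space, $r$ is in particular an exceptional slope distinct from the toroidal slope $4p$.

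The first step is to bound the distance $\Delta(r, 4p)$. Since $K(4p)$ is toroidal and $K(r)$ is atoroidal Seifert fibered, $r \ne 4p$; moreover, since $K$ bounds an essential once-punctured Klein bottle with boundary slope $4p$, the surgery at distance $1$ from $4p$ is the one that caps off the Klein bottle to a closed (non-orientable-related) surface, and results such as those in \cite{Wu2010} (and the references in the excerpt on exceptional surgeries on pretzel knots) pin down exactly which slopes near $4p$ can be exceptional. Concretely, I would argue that $\Delta(r,4p) \le 1$: if $\Delta(r,4p)\ge 2$ one can derive a contradiction either from the fact that the only toroidal slope is $4p$ together with the Gordon--Luecke--type intersection estimates between the Seifert and toroidal surgery curves, or directly from Wu's restrictions on atoroidal Seifert fibered surgeries on hyperbolic pretzel knots of length three (\cite[Theorem 7.2]{Wu2010}), which forces the relevant pretzel knot into a short explicit list that does not include $P(-2,p,p)$ for $p\ge 5$ unless the slope is close to $4p$. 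Hence $r = 4p$, $4p+1$, or $4p-1$; excluding $r = 4p$ (toroidal, not Seifert fibered), we get $r = 4p \pm 1$.

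The remaining point is to rule out $r$ being non-integral. Here I would invoke that $K$ is hyperbolic and that, by the cyclic and finite surgery theorems together with the small-Seifert-fibered structure with base $S^2$ and three exceptional fibers, the denominator of $r$ is tightly constrained; in fact, for these pretzel knots one already knows (via the genus-one Seifert surface and the ``only integral or half-integral'' phenomenon, or more simply via the fact that exceptional slopes here must lie within distance $1$ of the integral toroidal slope $4p$) that $r$ is an integer. Combined with $\Delta(r,4p) \le 1$ this forces $r \in \{4p-1, 4p, 4p+1\}$, and discarding the toroidal value completes the proof. The main obstacle I anticipate is the distance bound $\Delta(r, 4p) \le 1$: extracting this cleanly will require either a careful application of the intersection-number machinery between the toroidal and Seifert surgery solid tori on $\partial N(K)$, or a direct appeal to the classification of exceptional (and in particular atoroidal Seifert fibered) surgeries on hyperbolic pretzel knots of length three from \cite{Wu2009,Wu2010}, checking that $P(-2,p,p)$ with $p \ge 5$ does not match any entry of the resulting finite list except possibly at slopes adjacent to $4p$.
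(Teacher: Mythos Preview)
Your approach has a genuine gap at the key step $\Delta(r,4p)\le 1$. The appeal to Wu's Theorem~7.2 in \cite{Wu2010} does not help: $P(-2,p,p)$ is precisely of the allowed shape $(2,|q_2|,|q_3|)$, so that theorem does \emph{not} exclude it or restrict its Seifert slopes to lie near $4p$. Nor is there a general bound of the form $\Delta(\text{toroidal},\text{small Seifert})\le 1$ for hyperbolic knots in $S^3$; for instance $P(-2,3,7)$ has toroidal slope $20$ and small Seifert fibered surgeries at $17,18,19$, so distance $3$ occurs. Your fallback to ``Gordon--Luecke--type intersection estimates'' is not a proof, and the integrality argument via the cyclic/finite surgery theorems is also not justified, since $K(r)$ here has infinite fundamental group.

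The paper's argument is entirely different and exploits the symmetry special to $p=q$: the knot $K=P(-2,p,p)$ has a cyclic period~$2$ with factor knot the torus knot $T(2,p)$. Integrality of $r$ comes from Wu's \cite[Theorem~8.3]{Wu2009} (since $P(-2,p,p)$ with $p\ge5$ is not of type $(1/3,\pm1/3,\ast)$ or $(1/2,1/3,\ast)$). Then the quotient of the Seifert fibered manifold $K(r)$ by the induced involution is $T(2,p)(r/2)$, which by \cite[Lemma~3.8]{IchiharaJong2011} (after \cite{MiyazakiMotegi2002a}) must be a lens space. Moser's classification of surgeries on torus knots then forces $r/2 = 2p \pm 1/2$, i.e.\ $r = 4p\pm1$. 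The period-$2$ structure is doing all the work; without it, pinning the slope down to two values would be much harder.
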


\begin{proof}
%Suppose that $K(r)$ is a Seifert fibered manifold. 
First we see that $r$ must be an integer. 
Actually, if $K(r)$ is atoroidal Seifert fibered, 
then $r \in \mathbb{Z}$ unless $K$ is equivalent to 
one of the Montesinos knots of type 
$(1/3, \pm 1/3, \ast)$ or $(1/2, 1/3, \ast)$~\cite[Theorem 8.3]{Wu2009}. 
See \cite{Wu2009} for the precise statement. 

Next, we note that $K$ is a periodic knot 
with period two as shown in Figure~\ref{period2}. 
The factor knot $K'$ with respect to this cyclic period is 
equivalent to a torus knot $T(2,p)$. 
Then, since the following diagram commutes, 
by \cite[Lemma 3.8]{IchiharaJong2011}, 
originally observed in \cite{MiyazakiMotegi2002a}, 
$T(2,p)(r/2)$ must be homeomorphic to a lens space. 
$$\xymatrix{
S^3 \ar[d]_{r\text{-surgery}} \ar[r]^{/f} \ar@{}[dr]|\circlearrowleft & S^3/f=S^3 \ar@{>}[dr]^{r/2\text{-surgery}}&~ \\
K(r) \ar[r]^{/\bar{f}} & K(r)/\bar{f} & \hspace{-35pt} \cong K'(r/2)
}$$
Then we have $r/2 = 2 p \pm 1/2$ by the classification of Dehn surgeries on torus knots due to Moser~\cite{Moser1971}. 
Since $r \in \mathbb{Z}$, we have $r=4 p \pm 1$. 
\begin{figure}[htb]
\centering
\includegraphics[width=0.4\textwidth]{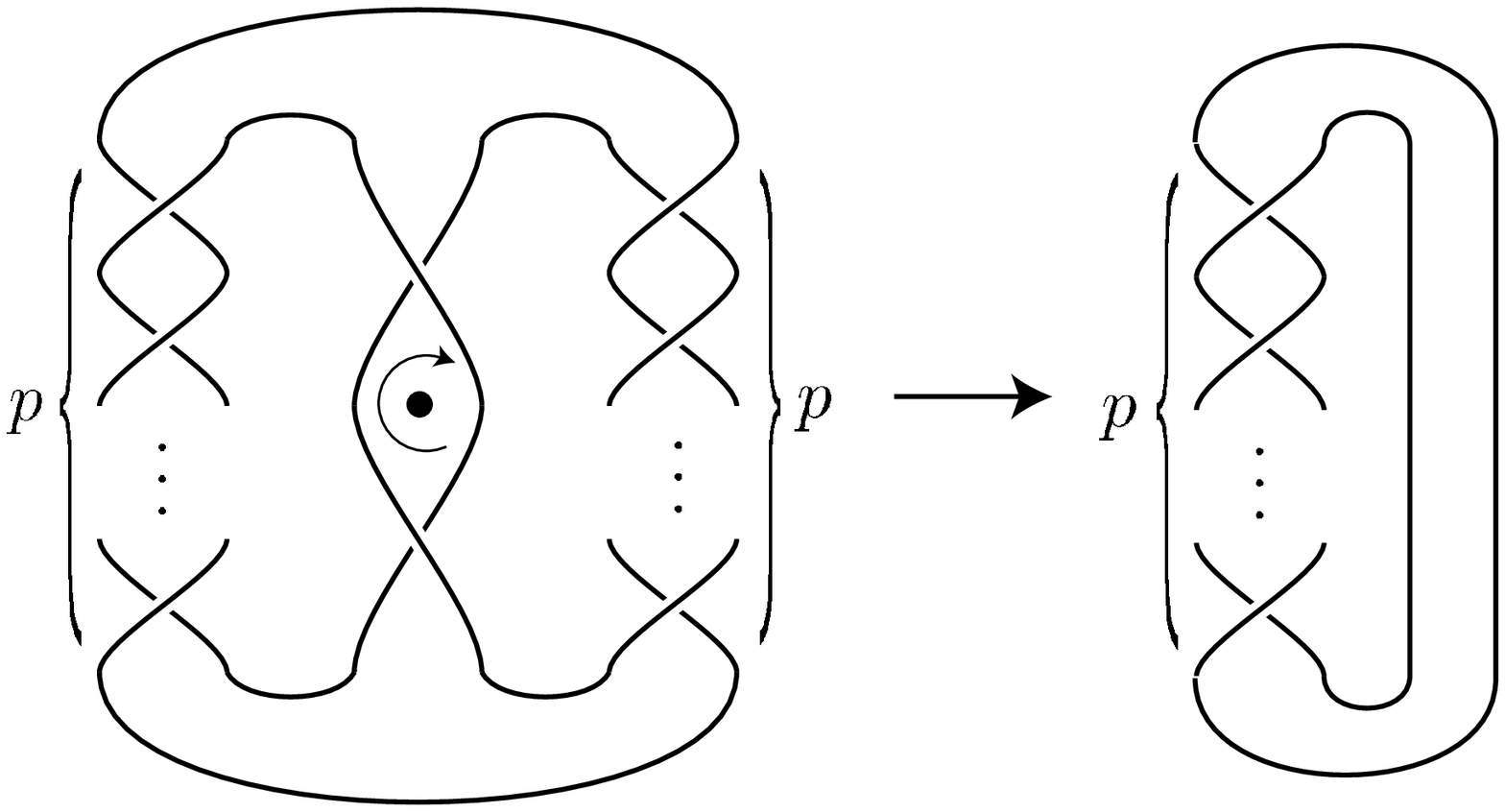}
\caption{}\label{period2}
\end{figure}
\end{proof}

Next we apply the \textit{Montesinos trick}, originally introduced in \cite{Montesinos1975}. 
Set an axis %$\alpha$ 
which induces a strong inversion of $K$ as shown in Figure~\ref{Kp}. 
%By taking the quotient via the involution, 
%we obtain the knot $K_{p \pm}$ as shown in Figure~\ref{Kp}. 
Then, applying the Montesinos trick, 
we see that 
the surgered manifold $K(4p \pm1)$ is homeomorphic to 
the double branched cover of $S^3$ branched along 
the knot $K_{p \pm}$ depicted in Figure~\ref{Kp}. 

\begin{figure}[htb]
\centering
\includegraphics[width=0.5\textwidth]{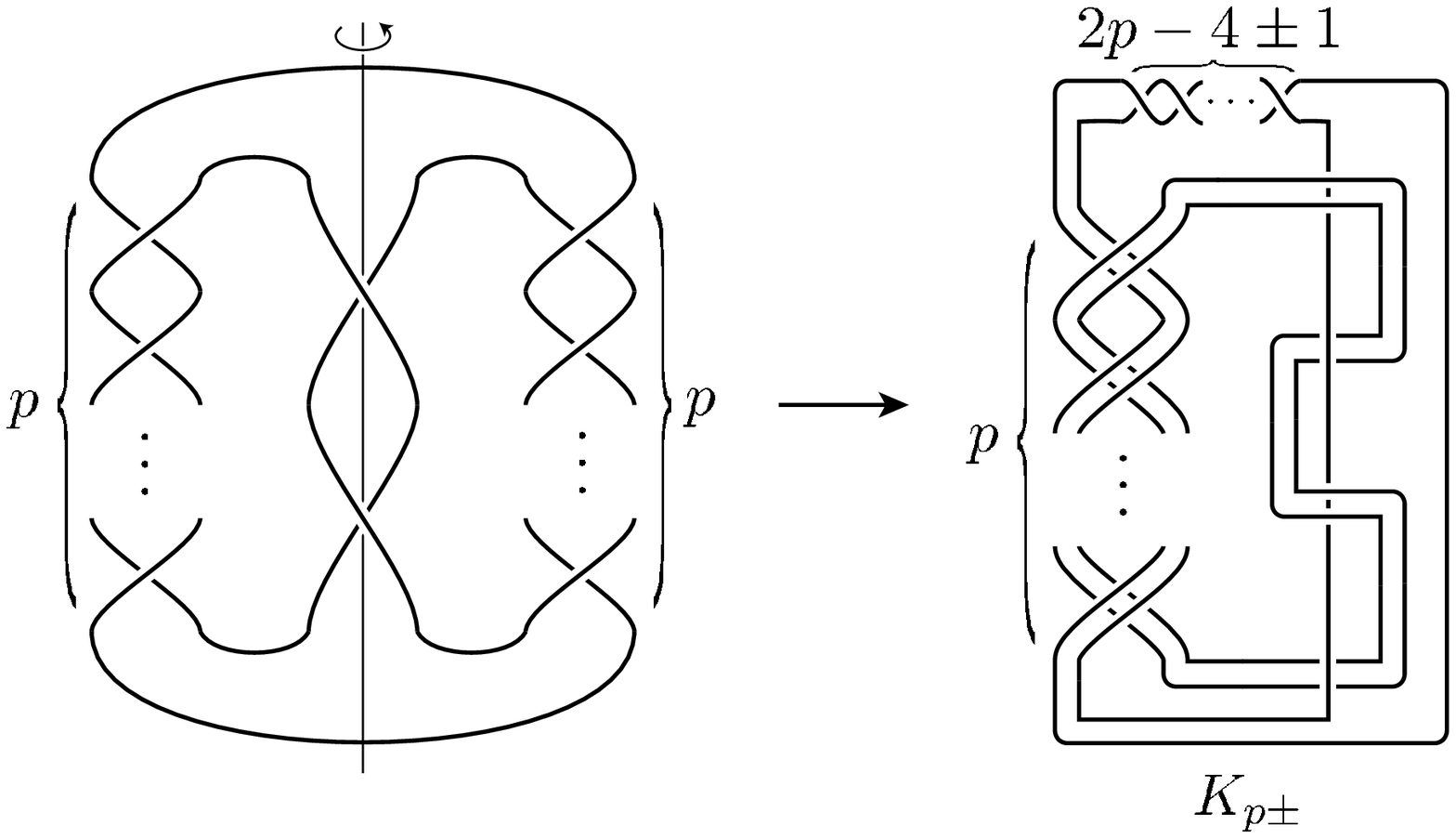}
\caption{}\label{Kp}
\end{figure}

By virtue of the next claim, to complete the proof of the theorem, 
it suffices to show that $K_{p \pm}$ is neither a Montesinos knot nor a torus knot.

\begin{claim}\label{clm:criterion}
The knot $K_{p \pm}$ must be a Montesinos knot or a torus knot
%If the surgered manifold $K(4 p \pm1)$ is Seifert fibered, 
%then $K_{p \pm}$ is a Montesinos knot or a torus knot. 
\end{claim}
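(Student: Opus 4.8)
The plan is to exploit the symmetry of $K(4p\pm1)$ coming from the Montesinos trick together with the severe restrictions already available for this manifold. By the cited results \cite{FuterIshikawaKabayaMattmanShimokawa2009,IchiharaJong2009,IchiharaJong2010a}, $K(4p\pm1)$ is an atoroidal Seifert fibered space whose base orbifold is $S^2$ with exactly three cone points. Since the complete classifications of cyclic, finite and toroidal Seifert surgeries on pretzel knots in \cite{IchiharaJong2009,IchiharaJong2010a}, together with the absence of reducible and of other toroidal surgeries from \cite{Wu1996,Wu2006}, do not include $P(-2,p,p)$ with $p\ge5$, the manifold $K(4p\pm1)$ is irreducible, atoroidal, and has infinite fundamental group. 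Hence it is geometric, modelled on the $\widetilde{\mathrm{SL}_2}$ or $\mathrm{Nil}$ geometry, and in particular it admits a Seifert fibration that is unique up to isotopy.

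First I would recall that, by the Montesinos trick, $K(4p\pm1)$ is the double branched cover of $(S^3,K_{p\pm})$ with an orientation-preserving covering involution $\tau$ whose fixed-point set $\mathrm{Fix}(\tau)$ is the lift of $K_{p\pm}$. Because the Seifert fibration is unique up to isotopy, the equivariant techniques of Meeks and Scott (or, alternatively, the equivariant geometrization theorem \cite{Thurston1978}) allow $\tau$ to be isotoped to a fiber-preserving involution, which then descends to an involution $\bar\tau$ of the base orbifold $S^2(\alpha_1,\alpha_2,\alpha_3)$. Since $\tau$ preserves the orientation of $K(4p\pm1)$, there are exactly two cases: (i) $\tau$ preserves the orientation of a generic fiber and $\bar\tau$ is a rotation of $S^2$; (ii) $\tau$ reverses it and $\bar\tau$ is a reflection of $S^2$.

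In case (i), $\mathrm{Fix}(\tau)$ is a union of fibers lying over fixed points of $\bar\tau$; connectedness of $\mathrm{Fix}(\tau)$ forces it to be a single fiber, the quotient $S^3 = K(4p\pm1)/\tau$ inherits a Seifert fibration, and $K_{p\pm}$ is the image of that fiber, hence a fiber of a Seifert fibration of $S^3$, i.e. a torus knot (possibly trivial). In case (ii), $\mathrm{Fix}(\tau)$ projects onto the reflector circle of $\bar\tau$, and the quotient presents $S^3$ as a Seifert fibered space over a disk with reflector boundary; reading off the Seifert invariants $(\alpha_i,\beta_i)$ along the boundary arc through the images of the cone points exhibits $(S^3,K_{p\pm})$ as a cyclic union of rational tangles, so that $K_{p\pm}$ is a Montesinos knot. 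Thus $K_{p\pm}$ is a torus knot or a Montesinos knot, as claimed.

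The hard part will be making case (ii) fully rigorous: one must check that the reflector circle of $\bar\tau$ can be arranged to meet each exceptional fiber once, so that the quotient disk orbifold carries precisely the three corner reflectors that produce the expected tangle decomposition. This entails handling the possibility that $\bar\tau$ fixes only one cone point and interchanges the other two (which forces two of the $\alpha_i$ to coincide and yields a Montesinos knot with fewer non-integral tangles), as well as the $\mathrm{Nil}$-geometric base orbifolds $S^2(3,3,3)$, $S^2(2,4,4)$, $S^2(2,3,6)$. Since the entire equivariant argument rests on the uniqueness of the Seifert fibration of $K(4p\pm1)$, a preliminary but essential step is to confirm, from \cite{IchiharaJong2009,IchiharaJong2010a} and \cite{Wu1996,Wu2006}, that $P(-2,p,p)$ with $p\ge5$ admits no reducible, toroidal, cyclic, or finite surgery, and hence that $K(4p\pm1)$ is not one of the reducible, toroidal, spherical, or lens-space-like Seifert fibered manifolds for which the fibration can fail to be unique.
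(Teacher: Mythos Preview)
Your argument is correct, and it is in substance the same as the paper's: the paper simply quotes \cite[Proposition~2.1]{IchiharaJong2011} (together with the Burde--Murasugi classification of Seifert links to pass from ``Seifert link'' to ``torus knot''), and that proposition is proved exactly by the equivariant Seifert-fibration argument you outline---make the deck involution $\tau$ fiber-preserving via Meeks--Scott, then split into the fiber-orientation-preserving case (branch set is a fiber, hence a torus knot downstairs) and the fiber-orientation-reversing case (quotient is a disk orbifold with reflector boundary, giving a Montesinos presentation). So you are not taking a different route; you are unpacking the black box the paper invokes.

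Two small remarks on the unpacking. First, you do not need the detour through infinite $\pi_1$ to secure uniqueness of the Seifert fibration before applying Meeks--Scott: their theorem produces an invariant fibration directly for finite group actions on closed Seifert fibered $3$-manifolds, so the verification that $K(4p\pm1)$ is neither reducible nor spherical nor a lens space is unnecessary for this step. Second, in your case~(i) the reason $\mathrm{Fix}(\tau)$ is a single fiber is that an orientation-preserving involution of $S^2$ has exactly two fixed points, over one of which $\tau$ may act by a half-turn; connectedness of the branch set then rules out two fixed fibers. The paper handles this by first observing that $4p\pm1$ is odd, so $K_{p\pm}$ is a knot---you use this implicitly, so it is worth making explicit.
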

\begin{proof}
%We just apply Proposition 2.1 in \cite{IchiharaJong2011}. 
%The proposition says that, 
For a strongly invertible hyperbolic knot $K$ and a rational number $r$, 
if $K(r)$ is a Seifert fibered manifold with the base orbifold $S^2$, 
then the link $L_r$ obtained by applying the Montesinos trick to $K(r)$, 
i.e., the link $L_r$ satisfying that the double branched cover of $S^3$ branched along $L_r$ is homeomorphic to $K(r)$, 
is equivalent to a Montesinos link or a Seifert link~\cite[Proposition 2.1]{IchiharaJong2011}. 
Here a link is said to be {\it Seifert} if its exterior is Seifert fibered. 
Also see \cite{MiyazakiMotegi2002a} and \cite{Motegi2003}. 

Note that $K_{p \pm}$ is a knot since $r=4p \pm1$ is odd. 
Since Seifert links are completely classified in \cite{BurdeMurasugi1970} (see also~\cite[Proposition 7.3]{EisenbudNeumann1985}), 
by this classification, we see that $K_{p \pm}$ is Seifert if and only if $K_{p \pm}$ is a torus knot. 
\end{proof}

Applying the criterion due to the first two authors~\cite{IchiharaJong2011}, 
which uses the Rasmussen invariant, 
we show that $K_{p \pm}$ is not a Montesinos knot as follows. 

\begin{claim}\label{clm:Montesinos}
The knot $K_{p \pm}$ is not a Montesinos knot. 
\end{claim}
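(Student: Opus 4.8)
The plan is to use the obstruction from \cite{IchiharaJong2011} based on the Rasmussen invariant $s$: if $K_{p\pm}$ were a Montesinos knot, then because $K_{p\pm}$ bounds a natural surface coming from the band presentation produced by the Montesinos trick, one gets strong numerical constraints relating $s(K_{p\pm})$ to the Seifert genus and to the parameters of the hypothetical Montesinos decomposition. So the first step is to read off an explicit diagram of $K_{p\pm}$ from Figure~\ref{Kp}: since the surgery coefficient is $4p\pm 1$ and the tangle replacement in the Montesinos trick on the axis shown changes one rational tangle by a controlled amount, $K_{p\pm}$ is presented as a closed braid-like diagram whose crossing number and writhe grow linearly in $p$. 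Second, I would compute (or bound) $s(K_{p\pm})$ from this diagram. The key tool is the slice–Bennequin type inequality for $s$: from a diagram $D$ with writhe $w(D)$ and Seifert circle count, $s(K_{p\pm}) \geq w(D) - (\text{\# Seifert circles}) + 1$, and when $D$ is a positive (or almost positive) diagram this is sharp. I expect $K_{p\pm}$ to be essentially positive or to differ from a positive diagram by a bounded number of negative crossings, so that $s(K_{p\pm})$ is determined up to a bounded error and is a specific linear function of $p$, say $s(K_{p\pm}) = 4p + O(1)$.

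Third, I would invoke the structural constraint on $s$ for Montesinos knots from \cite{IchiharaJong2011}: for a Montesinos knot $M(e; \beta_1/\alpha_1, \dots, \beta_n/\alpha_n)$ the Rasmussen invariant is pinned down (for the relevant sign of the fractions) by a formula of the form $s = \pm\bigl(\text{something depending on the }\alpha_i, \beta_i, e\bigr)$, and more importantly $|s|$ is bounded below by a quantity tied to the double branched cover being $K(4p\pm1)$ — in particular the double branched cover's first homology has order $|4p\pm1|$, which forces $\sum 1/\alpha_i$ and the continued-fraction data to satisfy a determinant condition $|\alpha_1\cdots\alpha_n|\cdot|e + \sum \beta_i/\alpha_i| = 4p\pm 1$. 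Combining this determinant constraint with the Montesinos-knot $s$-formula gives an upper bound on $|s(K_{p\pm})|$ that grows more slowly — or with a different leading coefficient — than the value $4p+O(1)$ computed directly. Fourth, comparing the two, for all $p\geq 5$ the direct computation contradicts the Montesinos bound, so $K_{p\pm}$ cannot be Montesinos.

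The main obstacle will be Step 2: getting a clean enough diagram of $K_{p\pm}$ from the Montesinos trick to pin down $s(K_{p\pm})$ exactly, rather than just bounding it. The worry is that $K_{p\pm}$ might not be visibly positive, in which case one needs a more delicate argument — either a different diagram (e.g.\ after flypes or a braid normalisation) on which the slice–Bennequin inequality is sharp, or an independent computation of $s$ via the signature or via $\tau$ if $K_{p\pm}$ happens to be alternating or quasipositive. A secondary subtlety is the $\pm$ ambiguity: the two knots $K_{p+}$ and $K_{p-}$ come from the two choices $r = 4p+1$ and $r=4p-1$, and one must check the Rasmussen computation and the determinant bound for both; but since the two diagrams differ only in a single half-twist region, the estimates run in parallel and the final inequality holds uniformly. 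Once Step 2 is settled the rest is the bookkeeping of Step 3, which is exactly the computation carried out in \cite{IchiharaJong2011} for the analogous family, so I would cite that wherever possible rather than redo it.
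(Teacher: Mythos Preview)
Your plan has a genuine gap: you have misidentified the obstruction from \cite{IchiharaJong2011}. The actual criterion (their Criterion~2.5) is that if $|s(K) + \sigma(K)| \ge 4$ then $K$ is not a Montesinos knot; it rests on the fact that every Montesinos link admits an almost-alternating diagram, which forces $|s+\sigma| \le 2$. There is no general closed formula for $s$ of a Montesinos knot in terms of its tangle parameters, and the determinant constraint you describe in Step~3, while correct, does not by itself control $s$. So Step~3 as written would not go through, and the signature --- which is half of the real argument --- is entirely absent from your outline.

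What the paper actually does is: (i) bound $s(K_{p\pm})$ from below via the slice--Bennequin inequality applied to the diagram in Figure~\ref{Kp}, obtaining $s(K_{p\pm}) \ge 6p - 15 \pm 1$ (the leading coefficient is $6$, not $4$ as you guessed); (ii) compute $\sigma(K_{p\pm})$ \emph{exactly} via the Gordon--Litherland form on an explicit non-orientable spanning surface with first Betti number~$3$, getting $\sigma(K_{p\pm}) = -4p + 9 \mp 1$; and (iii) add the two, yielding $s + \sigma \ge 2p - 6 \ge 4$ for $p \ge 5$. Your Step~2 is on the right track for part~(i), but without the signature computation in~(ii) the slice--Bennequin bound alone gives no contradiction.
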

\begin{proof}
We here apply the following fact: 
If $| s(K_{p\pm}) + \sigma(K_{p\pm})| \ge 4$, then $K_{p\pm}$ is not a Montesinos knot~\cite[Criterion 2.5]{IchiharaJong2011}. 
Here $s(K)$ denotes the \textit{Rasmussen invariant} for a knot $K$ and 
$\sigma(K)$ the \textit{signature} of a knot $K$. 
%$s(\text{righthanded trefoil}) = +2$ and $\sigma(\text{righthanded trefoil}) = -2$. 

%For readers' convenience, we here include a rough sketch to obtain the above fact. 
%It is known that how a crossing change affects on $\sigma(K)$ and $s(K)$ as follows. 
%Let $K_+$ and $K_-$ be knots with diagrams $D_+$ and $D_-$ 
%such that $D_+$ is obtained by single crossing change at some crossing in $D_-$. 
%Then we have the following: 
%\begin{itemize}
%\item
%$\sigma(K_+) \le \sigma(K_-) \le \sigma(K_+) + 2.$
%(see for example {\cite[Theorem 6.4.7]{MurasugiBook}})
%\item
%$s(K_-) \le s(K_+) \le s(K_-) + 2.$
%(\cite[Corollary 4.3]{Rasmussen2004})
%\end{itemize}
%Together with the fact that, 
%for an alternating knot $K$, $s(K) = -\sigma(K)$ holds~\cite{Rasmussen2004}, 
%the above two inequalities imply that 
%the minimal number of the crossing changes needed to deform a given knot $K$ to an alternating knot, 
%called the \textit{alternation number} of the knot, is at least $|s(K) + \sigma(K) | / 2$. 
%This is the observation given in \cite{Abe2009}. 
%%
%On the other hand, it is shown as \cite[Proposition A.5]{AbeKishimoto2009} 
%that the alternation number of a Montesinos link is zero or one. 
%Consequently, 
%if $| s(K) + \sigma(K)| \ge 4$ holds for a knot $K$, then $K$ is not a Montesinos knot. 

Now we need to calculate or estimate $s(K_{p\pm})$ and $\sigma(K_{p\pm})$. 

First we estimate the Rasmussen invariant $s(K_{p\pm})$ by using the following inequality obtained in \cite{Plamenevskaya2006} and \cite{Shumakovitch2007}. 
For a knot $K$ and a diagram $D$ of $K$, 
we have 
\begin{align}\label{eq:Bennequin}
s(K) \ge w(D) - O(D) +1 ,
\end{align}
where $w(D)$ denotes the writhe of $D$ and $O(D)$ denotes the number of Seifert circles of $D$. 
Applying this inequality to the diagram shown in Figure~\ref{Kp}, we have
\begin{align*}%\label{ineq_Kp}
s(K_{p\pm}) &\ge (4p - 8 + 2p -4 \pm 1) - 4 +1\\ 
&= 6p -15 \pm1.
\end{align*}

Next we calculate the signature $\sigma(K_{p\pm})$ by using the method due to Gordon and Litherland~\cite{GordonLitherland1978}. 
As shown in Figure~\ref{fig:surface}, $K_{p\pm}$ bounds a non-orientable surface $V_{p \pm}$ such that the first Betti number of $V_{p \pm}$ is equal to three. 
Take the loops $l_1$, $l_2$, and $l_3$ on $V_{p \pm}$, which form a basis of $H_1(V_{p \pm})$. 
Then a bilinear form $\mathcal{G}_{V_{p \pm}} : H_1(V_{p \pm}) \times H_1(V_{p \pm}) \rightarrow \mathbb{Z}$ introduced in \cite[Section 2]{GordonLitherland1978} is presented by the following matrix: 
\begin{align*}
\left(\begin{matrix}
4p -4 \pm1 & 0 & 2 \\ 
0 & 1 & 1 \\ 
2 & 1 & 0
\end{matrix}
\right).
\end{align*}

Since $p \ge 5$, we see that $\sign \mathcal{G}_{V_{p \pm}} = 1$, where $\sign \mathcal{G}_{V_{p \pm}}$ denotes the signature of $\mathcal{G}_{V_{p \pm}}$. 
Furthermore, by considering the boundary slope of $V_{p \pm}$, 
the normal Euler number of $V_{p}$ (see \cite[Section 3]{GordonLitherland1978}), denoted by $e(V_{p \pm})$, is 
shown to be $-8p +16 \mp 2$. 
Then by \cite[Corollary 5]{GordonLitherland1978}, we have 
\begin{align*}
\sigma(K_{p\pm}) &= \sign \mathcal{G}_{V_{p \pm}} + e(V_{p \pm})/2\\
&= 1 + (-8p +16 \mp 2)/2 \\
&= -4p + 9 \mp 1.
\end{align*}

\begin{figure}[htb]
\centering
\includegraphics[width=.23\textwidth]{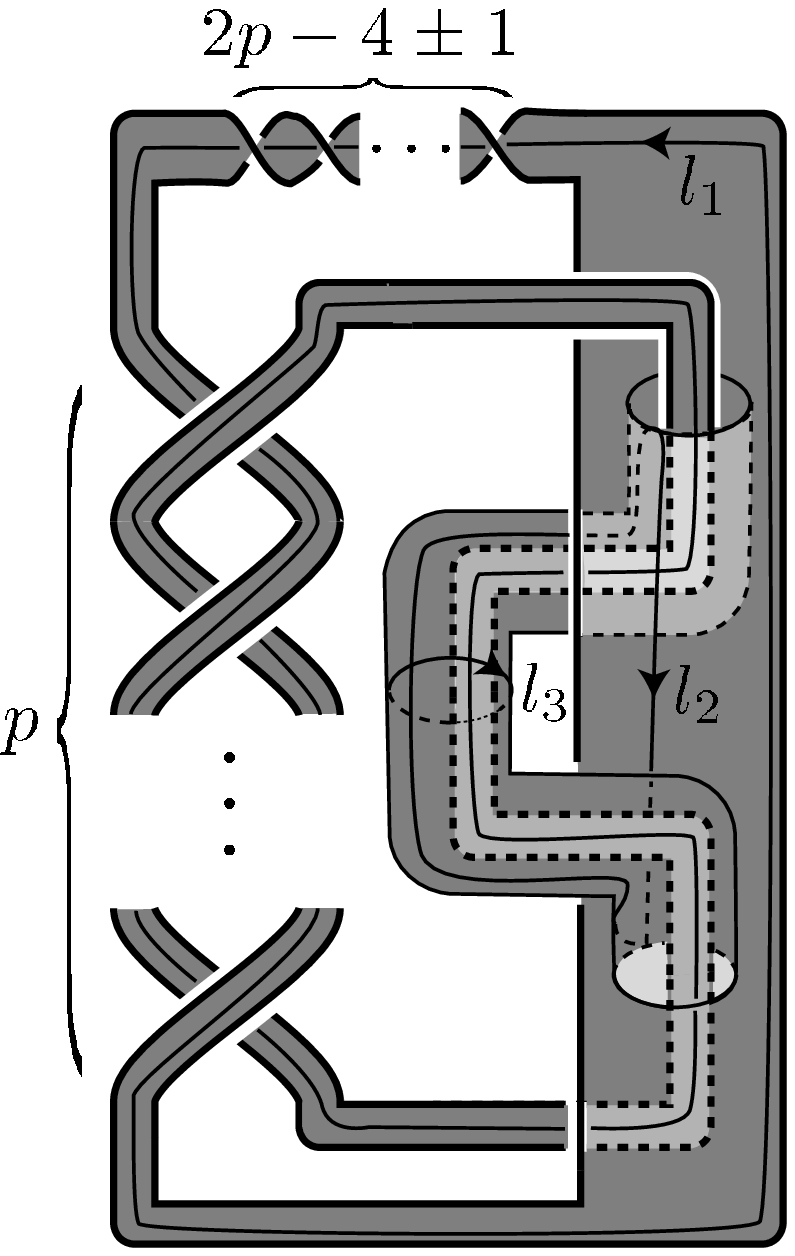}
\caption{}\label{fig:surface}
\end{figure}

Consequently, together with the assumption that $p \ge 5$, we obtain the following: 
\begin{align*}
|s(K_{p\pm}) + \sigma(K_{p\pm})| &\ge s(K_{p\pm}) + \sigma(K_{p\pm})\\
& \ge (6p -15 \pm1) + (-4p+ 9 \mp 1)\\
&= 2p -6 \\
&\ge 4. 
\end{align*}
Thus $K_{p \pm}$ is not a Montesinos knot. 
\end{proof}

Finally we show that $K_{p \pm}$ is not a torus knot as follows. 

\begin{claim}\label{clm:torus}
The knot $K_{p \pm}$ is not a torus knot. 
\end{claim}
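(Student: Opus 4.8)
The plan is to show that $K_{p\pm}$ cannot be a torus knot by comparing invariants with those already computed in Claim~\ref{clm:Montesinos}. The crucial observation is that for a torus knot $T(a,b)$ one has the exact equality $s(T(a,b)) = \sigma(T(a,b)) = \pm(a-1)(b-1)$ up to sign conventions; more importantly, $s(T(a,b)) = -\sigma(T(a,b))$ is \emph{false} in general --- rather $|s| = |\sigma| = (a-1)(b-1)$ and both have the same sign, so that $s(T(a,b)) + \sigma(T(a,b)) = \pm 2(a-1)(b-1)$. So the first step would be to recall or re-derive the values $s(K_{p\pm}) \ge 6p-15\pm 1$ and $\sigma(K_{p\pm}) = -4p+9\mp 1$ from the previous claim, and note that these have opposite signs for $p \ge 5$; indeed $s(K_{p\pm})$ is positive and large while $\sigma(K_{p\pm})$ is negative and large in absolute value.

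The key step is then the following tension: since the double branched cover $K(4p\pm 1)$ is Seifert fibered with base $S^2$ and three exceptional fibers, if $K_{p\pm}$ were a torus knot $T(a,b)$, then $K(4p\pm1)$ would be the double branched cover of a torus knot, hence a Seifert fibered space over $S^2$ with at most three exceptional fibers of a very restricted type --- in fact a Brieskorn sphere $\Sigma(2,a,b)$ (or a small Seifert space when $\gcd$'s intervene). One route: compute $|H_1(K(4p\pm1))| = 4p\pm 1$ and match this against $|H_1(\Sigma(2,a,b))| = ab$ (when this is finite), forcing $ab = 4p\pm1$ with one of $a,b$ forced to be small. Then I would combine this with the signature: $\sigma(T(a,b))$ for the candidate $(a,b)$ can be estimated (it grows like $\sim -ab/2$ for the relevant sign), and one checks it is incompatible with $\sigma(K_{p\pm}) = -4p+9\mp 1$, which is far too small in absolute value compared to $s(K_{p\pm})$. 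Alternatively, and probably cleaner, one uses that for torus knots $s + \sigma = \pm 2(a-1)(b-1) = \pm 2\,|s(T(a,b))|$, so in particular $|s(K_{p\pm}) + \sigma(K_{p\pm})| = 2\,s(K_{p\pm})$ would be forced if the signs align as for torus knots --- but we computed $|s(K_{p\pm}) + \sigma(K_{p\pm})| = 2p - 6$ while $2\,s(K_{p\pm}) \ge 2(6p - 16) = 12p - 32$, a contradiction for all $p \ge 5$. One must be slightly careful about which sign of $(a-1)(b-1)$ occurs, i.e.\ whether $K_{p\pm}$ would be a positive or negative torus knot; but since $s(K_{p\pm}) > 0$ the relevant case has $s + \sigma > 0$ equal to $2(a-1)(b-1) = 2s(K_{p\pm})$, and the negative-torus-knot case gives $s + \sigma < 0$, contradicting $s(K_{p\pm}) + \sigma(K_{p\pm}) = 2p - 6 > 0$ outright.

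A second, more elementary approach avoids invariants of the branched cover altogether: examine the diagram of $K_{p\pm}$ in Figure~\ref{Kp} directly and argue it is not a torus knot, e.g.\ by showing its genus (readable from the Seifert surface or the Gordon--Litherland surface) is incompatible with $(a-1)(b-1)/2$ for any $(a,b)$ with $ab = 4p\pm1$, or by showing the Alexander polynomial is not of torus-knot type. However, I would favor the invariant-based argument because $s$ and $\sigma$ are already in hand.

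The main obstacle I anticipate is pinning down the exact sign conventions and the precise relation $s(T(a,b)) = \sigma(T(a,b)) \cdot(\text{sign})$ --- the literature uses different sign normalizations for $\sigma$, and for torus knots one needs the correct statement that $s$ and $-\sigma$ coincide (with the convention making $\sigma(T(2,3)) = -2$), so that $s + \sigma = 0$ is \emph{impossible} unless the knot is trivial, whereas our knot has $s + \sigma = 2p-6 \neq 0$ but also $2p - 6 \ne \pm 2(a-1)(b-1)$ for the forced small values of $(a,b)$. Once the conventions are fixed, the numerics are immediate: $s(K_{p\pm})$ grows like $6p$ while $|s + \sigma| = 2p-6$, and no torus knot can have its $s$ three times larger than $|s+\sigma|$ unless $\sigma$ and $s$ have opposite signs, which never happens for torus knots. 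This contradiction, valid for every $p \ge 5$, completes the proof that $K_{p\pm}$ is not a torus knot, and hence --- via Claims~\ref{clm:criterion} and~\ref{clm:Montesinos} --- completes the proof of Theorem~\ref{thm:SF_IJ}.
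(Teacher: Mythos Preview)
Your central claim---that for a torus knot $T(a,b)$ one has $|s(T(a,b))| = |\sigma(T(a,b))| = (a-1)(b-1)$, so that $s+\sigma = \pm 2(a-1)(b-1)$---is false. While $s(T(a,b)) = (a-1)(b-1)$ for a positive torus knot, the signature is \emph{not} $\pm(a-1)(b-1)$ in general: for instance $T(3,7)$ has $s=12$ but $\sigma=-8$, and $T(4,5)$ has $s=12$ but $\sigma=-8$. (The equality $s=-\sigma$ holds only for $T(2,n)$, which are alternating.) Your alternate suggestion that $s+\sigma=0$ for all torus knots is therefore also wrong, and your closing assertion that ``$\sigma$ and $s$ have the same sign for torus knots'' is backwards under the standard convention. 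The side remark that $|H_1(\Sigma(2,a,b))|=ab$ is likewise incorrect: this is $\det(T(a,b))$, which equals $1$ when $a,b$ are both odd and equals the odd parameter when one is even. With no valid $s$--$\sigma$ relation and only a \emph{lower} bound on $s(K_{p\pm})$, nothing in your outline actually excludes a single torus knot.

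The paper's argument supplies exactly the two ingredients you are missing. First, it restricts the candidate torus knot: the diagram in Figure~\ref{Kp} is a closed $4$-braid, so the braid index is at most $4$; non-alternating (since $s\neq -\sigma$) rules out $T(2,\ast)$; and the determinant $\det(K_{p\pm})=|H_1(K(4p\pm1))|=4p\pm1$ rules out $T(3,\ast)$ (whose determinant is $1$ or $3$) and forces $K_{p\pm}=T(4,4p\pm1)$. Second, it produces an \emph{upper} bound $s(K_{p\pm})\le 6p-9\pm1$ by applying the slice--Bennequin inequality~\eqref{eq:Bennequin} to the mirror $K_{p\pm}^*$ and using $s(K^*)=-s(K)$. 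Since $s(T(4,4p\pm1))=3(4p\pm1)-3=12p\pm3-3$ is far larger, this yields the contradiction. Your sketch never isolates a specific torus knot and never bounds $s(K_{p\pm})$ from above, so as it stands it does not prove the claim.
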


\begin{proof}
Suppose that $K_{p\pm}$ is a torus knot. 
As shown in Figure~\ref{Kp}, the knot $K_{p\pm}$ is represented as a closure of a four-braid, 
the braid index of $K_{p\pm}$ is at most four. 
Since $s(K_{p\pm}) \ne - \sigma(K_{p\pm})$ as in the proof of Claim~\ref{clm:Montesinos}, $K_{p\pm}$ is non-alternating. 
Actually, if a knot $K$ is alternating, then we have $s(K) = -\sigma(K)$~\cite[Theorem 3]{Rasmussen2004}. 
Hence the braid index of $K_{p\pm}$ is three or four. 

Then we see that $K_{p \pm } = T(4, 4p \pm 1)$ as follows: 
%Here we consider the \textit{determinant} of the knot $K_{p\pm}$, 
%and compare those of torus knots. 
For a knot $K$, let $\det(K)$ be the {\it determinant} of $K$ and $\Delta_K(t)$ the {\it Alexander polynomial} of $K$. 
Note that we have $\det(K) = \Delta_K(-1)$ and it also coincides with the order of the first homology group of the double branched covering space of $S^3$ branched along $K$ (see for example \cite{KawauchiBook} or \cite{RolfsenBook}). 

Since $K(r)$ is the double branched cover of $S^3$ branched along $K_{p\pm}$, 
we have $\det(K_{p\pm}) = | H_1 (K(r)) | = r = 4p \pm 1$. 
On the other hand, since $\Delta_{T(x,y)}(t) = (t^{xy}-1)(t-1)/(t^x -1)(t^y -1)$, we have $\det (T(3,x)) = 1$ or $3$, and $\det (T(4,x)) = x$. 
Since $4p \pm 1 \ge 19$, $K_{p\pm}$ is equivalent to $T(4,x)$ and we also have $x = 4p \pm 1$. 

Next we consider the Rasmussen invariant of $K_{p \pm }$ and $T(4,4p \pm 1)$. 
For a knot $K$, we denote by $K^*$ the mirror image of $K$. 
By the inequality~\eqref{eq:Bennequin}, we have
\begin{align*}
s(K_{p\pm}^*) &\ge -(4p -8 + 2p -4 \mp 1) -4 +1\\
&= -6p +9 \mp 1. 
\end{align*}
Since $s(K^*) = -s(K)$ holds for a knot $K$~\cite[Theorem 2]{Rasmussen2004}, we have 
$$s(K_{p\pm}) \le 6p -9 \pm 1. $$

On the other hand, since for a positive knot $K$, we have $s(K) = 2g(K)$~\cite[Theorem 4]{Rasmussen2004} and $2g(T(4,4q \pm1)) = 12p \pm 3 -3$, we have 
$$s(T(4,4p \pm1)) = 12p \pm 3 -3. $$
Therefore we have $ s(K_{p\pm}) < s(T(4,4p \pm1))$ and thus, $K_{p\pm} \ne T(4,4p \pm1)$. 
A contradiction occurs. 
\end{proof}

Now, by Claim~\ref{clm:criterion}, Claim~\ref{clm:Montesinos} and Claim~\ref{clm:torus}, 
we have a contradiction, and complete the proof of Theorem~\ref{thm:SF_IJ}.
\qed

\begin{ack}
The authors would like to thank Professor Akira Yasuhara for helpful comments on a calculation of the signature of a knot. 
The first author is partially supported by 
Grant-in-Aid for Young Scientists (B), No.~20740039,
Ministry of Education, Culture, Sports, Science and Technology, Japan.
The second author is partially supported by Grant-in-Aid for Research Activity Start-up, No.~22840037, Japan Society for the Promotion of Science.
\end{ack}

%%%%%%%%%%%%%%%%%%%%%%%%%%%%%%%%%%%%

\end{document}